\newcommand{\be}{\begin{equation}}
\newcommand{\ee}{\end{equation}}
\newcommand{\bea}{\begin{eqnarray}}
\newcommand{\eea}{\end{eqnarray}}
\newtheorem{theorem}{Theorem}
\newtheorem{conclusion}[theorem]{Conclusion}
\newtheorem{corollary}[theorem]{Corollary}
\newtheorem{lemma}[theorem]{Lemma}
\newenvironment{proof}[1][Proof]{\noindent\textbf{#1.} }{\ \rule{0.5em}{0.5em}}
\begin{document}

\title{Birkhoff Theorem for Berwald Finsler spacetimes}

\author{Nicoleta Voicu}
\email{nico.voicu@unitbv.ro}
\affiliation{Faculty of Mathematics and Computer Science, Transilvania University, Iuliu Maniu Str. 50, 500091 Brasov, Romania}
\author{Christian Pfeifer}
\email{christian.pfeifer@zarm.uni-bremen.de}
\affiliation{ZARM, University of Bremen, 28359 Bremen, Germany}
\author{Samira Cheraghchi}
\email{samira.cheraghchi@unitbv.ro}
\affiliation{Faculty of Mathematics and Computer Science, Transilvania University, Iuliu Maniu Str. 50, 500091 Brasov, Romania}

\begin{abstract}
Finsler spacetime geometry is a canonical extension of Riemannian spacetime geometry. It is based on a general length measure for curves (which does not necessarily arise from a spacetime metric) and it is used as an effective description of spacetime in quantum gravity phenomenology as well as in extensions of general relativity aiming to provide a geometric explanation of dark energy. A particular interesting subclass of Finsler spacetimes are those of Berwald type, for which the geometry is defined in terms of a canonical affine connection that uniquely generalizes the Levi-Civita connection of a spacetime metric. In this sense, Berwald Finsler spacetimes are Finsler spacetimes closest to pseudo-Riemannian ones. We prove that all Ricci-flat, spatially spherically symmetric Berwald spacetime structures are either pseudo-Riemannian (Lorentzian), or flat. This insight enables us to generalize the Jebsen-Birkhoff theorem to Berwald spacetimes.
\end{abstract}

\maketitle

\section{$SO(3)$ symmetric Berwald  Finsler gravity}
\label{sec:SO3BerwFins}
Spherically symmetric spacetime geometry is used to describe physical systems like black holes,  neutron stars, or the solar system, in their non-rotating approximation. Understanding the gravitational field of these systems in spherical symmetry is the first step toward a more realistic description in terms of axially symmetric spacetime geometry, which takes rotational effects into account.

In general relativity, the only family of spherically symmetric vacuum solutions to the Einstein equation are the Schwarzschild solutions, which are parameterized by the Schwarzschild radius. This fact is ensured by the famous Birkhoff (or Jebsen-Birkhoff) Theorem \cite{VojeJohansen:2005nd,1923Birk}.

In this article, we consider Finsler spacetime geometry, which is a very promising candidate as an extended geometry of spacetime to improve the description of gravity beyond general relativity, \cite{Beem,Javaloyes:2018lex,Javaloyes:2022hph,Carvalho:2022sdz, Fuster:2015tua, Heefer_2023, Li:2015sja, Minas:2019urp, Papagiannopoulos:2017whb, Pfeifer:2019, Saridakis:2021vue, Stavrinos2014,Tavakol2009, Triantafyllopoulos:2018bli}. In particular, Finsler geometry is employed to describe the gravitational field of kinetic gases on the tangent bundle without velocity averaging~\cite{Hohmann:2020yia}, a very possible source of dark energy, and finds applications as an effective description of a quantum spacetime in quantum gravity phenomenology including deformations of local Lorentz invariance \cite{Addazi:2021xuf, Gibbons:2007iu, Kostelecky:2011qz, Lobo:2020qoa, Mavromatos:2010nk, Zhu:2022blp}.

In Finsler spacetime geometry, the fundamental geometric object is, physically speaking, a point particle Lagrangian (technically a scalar-valued function) $L=L(x,\dot{x})$, depending on the spacetime points and velocity (tangent) vectors, and giving rise to a notion of arc length. This is a straightforward generalization of Riemannian spacetime geometry - which is recovered in the particular case when $L=g_{ab}(x)\dot{x}^a\dot{x}^b$ is quadratic in the velocity components. S.S. Chern coined the following sentence to summarize Finsler geometry: "Finsler (spacetime) geometry is (pseudo)-Riemannian geometry without the quadratic restriction" \cite{Chern1996}. Originally, he referred to positive definite Finsler geometry, but the same phrase easily carries over to indefinite Finsler geometry (and in particular, to Finsler spacetime geometry).\\
The Finslerian arc length - which is, in a spacetime setting, physically interpreted as proper time - allows one to define geodesics and curvature in a largely similar way to Riemannian geometry. In particular, it uniquely and canonically determines a generalization of the Levi-Civita connection - which is, though, generally not an affine connection on spacetime, but a \textit{nonlinear} connection \cite{Bucataru_2011,Bao}.\\
Among all Finsler spacetimes, the ones closest to pseudo-Riemannian geometry are \textit{Berwald spacetimes}, \cite{Berwald1926}. These are a distinguished subclass, singled out by the existence of a unique \textit{affine (or linear)} connection on spacetime with similar properties as the Levi-Civita one in Riemannian geometry.

\bigskip

In this context, we search for spherically symmetric solutions of the canonical action-based Finsler gravity equations \cite{Hohmann:2021zbt} and find that all $SO(3)$-symmetric, Finsler-Ricci flat Berwald spacetime solutions of the vacuum Finsler gravity equation must be either flat, or pseudo-Riemannian; actually, such a result even holds independenly of the signature, that is, for the more general class of 4-dimensional pseudo-Finsler spaces. This finding can be summarized as an extension of the Jebsen-Birkhoff Theorem to Berwald spacetime geometry: the only Ricci flat Berwald spacetimes with non-vanishing curvature are the usual pseudo-Riemannian, static and asymptotically flat Schwarzschild spacetimes.

In the case of small (linearized) Finslerian departures from the Schwarzschild metric, a similar result was proven by Rutz, back in 1998 \cite{RUTZ1998300}, which we know extend to general Berwald spacetimes.

The notation and conventions throughout this article are as follows. We consider a 4-dimensional manifold $M$, equipped with local coordinates $(x^a)$ and its tangent bundle $TM$, with naturally induced local coordinates $(x^a,\dot x^a)$; the local coordinate bases of the tangent spaces to $TM$ will be denoted by $\partial_a = \partial/\partial x^a$ and $\dot \partial_a = \partial/\partial\dot x^a$. Indices $a,b,c$ run from 0 to 3; whenever the local chart is fixed, we will omit for simplicity the indices of the coordinates of points $(x,\dot{x}) \in TM$.

\subsection{Finsler geometry in spherical symmetry}
In this section we briefly recall the basic mathematical notions of Finsler spacetimes and their dynamics.\vspace{6pt}

Roughly speaking, passing from Riemannian geometry to Finslerian one amounts to replacing the scalar product in each tangent space with just a \textit{norm} (that does not necessarily arise as the square root of some scalar product). In other words, in a Finsler space, the geometry is derived from a general length measure for curves
\begin{align}\label{eq:flength}
	S[\gamma] = \int F(\gamma, \dot\gamma) d\tau\,,
\end{align}
where the scalar-valued \textit{Finsler function}, or \textit{Finsler norm} $F=F(x,\dot x)$ is required to be positively $1$-homogeneous in~$\dot x$, to ensure the reparametrization invariance of $S[\gamma]$. All necessary geometric objects are then derived from $F$, or, more conveniently, from 
\begin{equation}
L:=F^2
\end{equation} 
and its derivatives. All details about positive definite Finsler geometry can be found for example in the book \cite{Bao}, while an extended discussion about Finsler spacetimes and the subtleties of extending Finsler geometry from positive definite to indefinite length measures, can be found, e.g, in the articles \cite{Hohmann:2021zbt,Javaloyes:2018lex}; below, we just briefly review the main notions to be used in the next sections.  

\vspace{6pt}

\noindent A \emph{pseudo-Finsler space}, \cite{Bejancu}, is a smooth manifold $M$ equipped with a function $L:\mathcal{A}\to \mathbb{R},\ (x,\dot x) \mapsto L(x,\dot x)$, where $\mathcal{A}\subset TM$ is a conic subbundle\footnote{A conic subbundle of the tangent bundle $TM$ is a subset of $TM\setminus\{0\}$ with non-empty fibers $\mathcal{A}_x=\mathcal{A}\cap T_{x}M\setminus\{0\}$, which is invariant under positive rescaling of vectors, i.e., if $(x,\dot{x}) \in \mathcal{A}$, then $(x,\lambda\dot{x}) \in \mathcal{A}$, for all $\lambda >0$.}, $L$ is 2-homogeneous in $\dot x$ and the Hessian of $L$ w.r.t. $\dot x$, called the Finsler metric tensor,
\begin{align}
	g_{ab}(x,\dot x)  = \frac{1}{2}\dot{\partial}_a\dot{\partial}_b L\,
\end{align}
is non-degenerate on $\mathcal{A}$. \vspace{6pt}

\noindent In particular, a \emph{Finsler spacetime} is a pseudo-Finsler space with a well defined physical causal structure. We demand that there exists a conic subbundle $\mathcal{T}\subset \mathcal{A}$, where the Finsler metric has Lorentzian signature $(+,-,-,-)$ and $L>0$ - and whose fibers $\mathcal{T}_x = \mathcal{T} \cap T_xM$ are physically interpreted as the cones of future pointing timelike directions $\mathcal{T}_x$ in each tangent space $T_xM$. Moreover, in order to ensure a compatible motion of massless particles (on the set where $L=0$) and massive particles (on the set $\mathcal{T}$), the boundaries $\partial\mathcal{T}_x$ shall be light cones, and $L$ must thus be continuously extended as $L=0$ to  $\partial\mathcal{T}_x$, \cite{Hohmann:2021zbt}. If this extension is smooth, then the Finsler spacetime is called \emph{proper}~\cite{Javaloyes:2021jqw}.  We mention that in the literature there exist several more suggestions how to define Finsler spacetimes, which differ in the mathematically precise smoothness conditions on $L$, but agree all on the requirement that a well-defined causal structure must exist \cite{Beem,Minguzzi:2014fxa,Caponio:2015hca}.

On Finsler spacetimes, the integral length measure \eqref{eq:flength} is physically interpreted as a geometric clock and as a point particle action. It measures the length of worldlines of particles and observers and determines the point particle equations of motion as being the Finsler geodesic equation.

The geometry of general Finsler spacetimes is formulated on the tangent bundle. The basic geometrical object is the unique, torsion free, Finsler metric compatible \textit{Cartan (canonical) nonlinear connection} on $TM$ that is 
the straightforward generalization of the  Levi-Civita connection to the Finsler geometry setting. It is uniquely determined by the Finsler function, just as the Levi-Civita connection is uniquely determined by a Riemannian metric.
For gravitational dynamics of the Finsler spacetimes, the curvature of the canonical nonlinear connection becomes the main object. Apart from this, we also need several non-Riemannian quantities, which we briefly introduce below.
\vspace{6pt}

As most of the results below hold true in arbitrary signature, unless elsewhere specified, we will work in the more general class of pseudo-Finsler spaces $(M,L)$.  \vspace{6pt}

The local coordinate expressions of the geometric objects (defined on $\mathcal{A}$) which we need in this article are:
\begin{itemize}
	\item The Cartan tensor $C_{abc}$ and its trace $C_a$. Cartan tensor is a measure of the deviation of a given Finsler geometry from (pseudo)-Riemannian geometry and vanishes if and only if $L$ is quadratic in $\dot x$ - i.e., if $L$ defines a pseudo-Riemannian geometry and the Finsler metric is independent of $\dot x$:
	\begin{align}
		C_{abc} = \frac{1}{4}\dot{\partial}_a\dot{\partial}_b \dot{\partial}_c L = \frac{1}{2}\dot{\partial_a} g_{bc}\,,\quad C_a = g^{bc}C_{abc}\,.
	\end{align}
	The trace of the Cartan tensor thus has the interpretation of an average of these deviations along various directions - and can be nicely expressed in terms of $\det(g)$:
	\begin{align}
	C_a = \dot{\partial}_a \ln(\sqrt{|\det(g)|})\,.
	\end{align} 
	\item Considering the arc length functional and applying the Euler-Lagrange equations, the geodesic equation in arclength parameterization is expressed in terms of the geodesic spray coefficients $G^a$:
	\begin{align}
		\ddot x^a + 2 G^a(x,\dot x) = 0\,,\quad 	G^a = \frac{1}{4}g^{ab}(\dot x^c\dot \partial_b\partial_c L- \partial_b L) \,.
	\end{align}	
	{The latter define} the canonical (Cartan) nonlinear connection coefficients $N^a{}_b$, which in turn define the horizontal derivative operators $\delta_a$, 
	\begin{align}
	N^a{}_b = \dot{\partial}_b G^a\,,\quad \delta_a = \partial_a - N^b{}_a \dot \partial_b\,.
	\end{align}
	We note that in general the nonlinear connection coefficients are not linear in $\dot x^a$, but only $1$-homogeneous, hence their name. They define a torsion-free connection, in the sense that, by construction, $\dot \partial_a N^c{}_b = \dot \partial_b N^c{}_a$.

	The horizontal derivative operators $\delta_a$ have the property that they transform under manifold induced coordinate transformations $(x, \dot x)\to(\tilde x(x), \dot {\tilde x}(x,\dot x)) = (\tilde x(x), \dot x^c  \partial_c \tilde x(x)^a)$ on $TM$, in the same way as the coordinate basis $\partial_a$ on spacetime. This way, the Cartan nonlinear connection coefficients ensure that Finsler spacetime geometric objects, which live on the tangent bundle, behave well under natural coordinate changes on $TM$.
	
	\item The curvature $R^a{}_{bc}$ of the canonical nonlinear connection and the (2-homogeneous) Finsler-Ricci scalar $R$ are given by
	\begin{align}
		R^a{}_{bc} = \delta_c N^a_b - \delta_b N^a_c, \quad R = R^a{}_{ab}\dot x^b\,.
	\end{align} 
	The former is the generalization of the Riemann curvature tensor and sources the geodesic deviation equation, i.e. gravitational tidal forces, while the latter is the canonical curvature scalar serving as a Lagrangian for the dynamics of Finsler spacetimes.
	
	\item The Landsberg tensor $S_{abc}$ and its trace $S_a$. The Landsberg tensor is obtained by applying the \textit{dynamical covariant derivative} $\nabla$ to the Cartan tensor; thus, it measures the rate at which the non-Riemannian character of the considered Finsler geometry varies as one moves along geodesics:
	\begin{align}
		S_{abc} = \nabla C_{abc} := \dot x^d \delta_d C_{abc} - N^d{}_a C_{dbc} -  N^d{}_b C_{adc} - N^d{}_c C_{abd}\,,\quad S_{a} = \nabla C_a =\dot x^b \delta_b C_a - N^b{}_a C_b\,.
	\end{align}
	Since, by construction of the Cartan nonlinear connection, the dynamical covariant derivative defined by coefficients $N^a{}_b$ is Finsler metric compatible, $\nabla g_{ab}=0$, we have that taking the trace with the Finslerian metric and covariant differentiation commute.
\end{itemize}
Before we use these notions to defined Berwald spacetimes in the next subsection we introduce the notion of spherical symmetry.

\emph{Spatially spherically symmetric} pseudo-Finsler spaces are defined by the existence of three Killing vector fields\footnote{Killing vector fields are vector fields on spacetime along whose flow lines the Finsler function is constant.} $X_i$ of $(M,F)$, which generate the $\mathfrak{so}(3)$ Lie algebra. 
In the following, let us fix a local chart on $M$ and work in spherical coordinates $(x^a):=(t,r, \theta,\phi)$ on the respective chart. Then, the canonical lifts to $TM$ of these generators are known to be, in naturally induced coordinates: $(t,r,\theta,\phi,\dot{t},\dot{r},\dot{\theta},\dot{\phi})$:
\begin{align}
	\begin{split}
		&X^C_1=\sin\phi\partial_\theta+\cot\theta\cos\phi\partial_\phi+\dot{\phi}
		\cos\phi\dot{\partial}_\theta-\bigg(\dot\theta\frac{\cos\phi}{\sin^2\theta}
		+\dot\phi\cot\theta\sin\phi\bigg)\dot{\partial}_\phi\,, \\
		&X^C_2=-\cos\phi\partial_\theta+\cot\theta\sin\phi\partial_\phi+\dot{\phi}
		\sin\phi\dot{\partial}_\theta-\bigg(\dot\theta\frac{\sin\phi}{\sin^2\theta}
		+\dot\phi\cot\theta\cos\phi\bigg)\dot{\partial}_\phi\,, \\
		&X^C_3=\partial_\phi\,.
	\end{split}
\end{align}
 Solving the Finslerian Killing equation $X^C(L)=0$ for these vector fields, results in the fact, \cite{Pfeifer:2013gha}, that $L$ can only depend on the coordinates $(t,r,\theta,\phi,\dot{t},\dot{r},\dot{\theta},\dot{\phi})$ in a very constrained way, namely,
\begin{align}
	L(x,\dot x) = L(t,r,\dot t, \dot r, w),\quad w^2 = \dot	\theta^2 +  \dot \phi^2 \sin^2\theta\,.
\end{align}

\subsection{Berwald geometry in spherical symmetry}
A pseudo-Finsler space is said to be \textit{of Berwald type } if the geodesic spray coefficients $G^a$ are (in one, then, in any local chart) quadratic in the coordinates $\dot{x}^a$, accordingly, the nonlinear connection coefficients are linear in $\dot{x}^a$:
\begin{align}
	 G^a  = \frac{1}{2}\Gamma^a{}_{bc}(x) \dot x^b \dot x^c\,,\quad N^a{}_b = \Gamma^a{}_{bc}(x) \dot x^c\,.
\end{align}
This way, Berwald spaces are pseudo-Finsler spaces which are closest to pseudo-Riemannian geometry, in the sense that they still admit a unique torsion-free affine connection (with coefficients $\Gamma^a{}_{bc}(x)$) on the base (spacetime) manifold $M$, which is compatible with the Finsler metric. In Lorentzian signature, this connection is yet not necessarily the Levi-Civita connection of any pseudo-Riemannian metric, see \cite{Fuster:2020upk} - making the geodesic structure of Berwald spacetimes more interesting and nontrivial.

Coming back to Berwald spaces of arbitrary signature, the (2-homogeneous) Finsler-Ricci scalar $R=R^{a}{}_{ab}\dot{x}^b$, see the Appendix \ref{app:defs} below, is also quadratic in the velocities; moreover, the trace of the Landsberg tensor vanishes:
\begin{align}
	R = R_{ab}(x)\dot x^a \dot x^b\,, \quad S_a = 0\,.
\end{align}

In a previous paper, \cite{Cheraghchi:2022zgv}, where all details can be found, we showed that nontrivially Finslerian, 4-dimensional $SO(3)$-symmetric Berwald metrics exist and can be divided into five classes. Below, we briefly review the technical details on these metrics and on their respective canonical connections which we will need in the following.

Let $L$ be a generic $SO(3)$-invariant, non-Riemannian Berwald function and denote by $\Gamma$ its canonical connection. Then, the coefficients $\Gamma^{a}_{bc}$ are given by 10 free functions $k_i=k_i(t,r)$ and the curvature tensor of $\Gamma$ is described by 14 coefficients $a_i=a_i(t,r)$ (see Appendix \ref{app:defs} for the precise formulas of $k_i$ and $a_i$).
Moreover, we assume for now that $k_7,k_8,k_9,k_{10}$ are not all zero (the case when $k_7=k_8=k_9=k_{10}=0$ will be treated separately), in particular we assume that $k_{10}\neq0$, so that we can define
\begin{equation}
a = \frac{k_7}{k_{10}}, \quad b = \frac{k_8}{k_{10}}, \quad c = \frac{k_9 k_{10} - k_7 k_8}{k_{10}^2}.
\end{equation} 
The above quantities allow us to introduce several relations:
\begin{itemize}
	\item The curvature components $a_6,...,a_{13}$ are subject to the constraints:
	\begin{equation}
	a_{6}=aa_{7},~a_{8}=ba_{7},~\ a_{9}=\left( ab+c\right) a_{7},~\
	a_{10}=aa_{11},~a_{12}=ba_{11},~\ \ a_{13}=\left( ab+c\right) a_{11}\,.
	\label{metrizability_conds2}
	\end{equation}
	\item There hold the identities:
	\begin{equation}
	a_{5}=a_{9}-a_{12}=\left( ab+c\right) a_{7}-ba_{11}\,.  \label{a5_identity}
	\end{equation}
	\item The curvature components $a_1,...,a_5$ are always related as:, 
	\begin{align}\label{A-F_defdef}
	\begin{split}
	A &:=b\left( aa_{1}+a_{2}\right) +\left( ab+c\right) \left(aa_{3}+a_{4}\right) -a_{5}\left( 2ab+c\right)=0\,,   \\
	B &:=a\left( aa_{3}+a_{4}\right) -\left( aa_{1}+a_{2}\right)=0\,, \\
	C &:=\left( ab+c\right) a_{3}+b\left( aa_{3}+a_{4}\right) +b (a_{1}-2a_{5})=0\,.
	\end{split}
	\end{align}
	\item Moreover, the quantities:
	\begin{align} \label{def_DEF}
		D := aa_{3}-a_{1}+a_{5}\,,\quad E:=ba_{3}\,,\quad F:=aa_{3}-a_{1}\,,
	\end{align}
	will help us discriminate among the possible classes of Berwald-Finsler metrics $L$.
\end{itemize}
With the above notations, the five classes of nontrivially Finslerian, $SO(3)$-invariant 4-dimensional Berwald metrics found in \cite{Cheraghchi:2022zgv} split into two groups I and II:
\begin{itemize}
	\item[I.] For connections with $k_7, k_8, k_9, k_{10}$ not all zero:
	\begin{enumerate}
	\item Power law type $D \neq 0$,
	\begin{align}\label{eq:1-L2}
		L = \vartheta(t,r) u^{2-2 \lambda}\left(v + \rho u^{2}\right)^\lambda\,, 
	\end{align}
	where $\lambda=\frac{F}{D}$ is a constant, $\rho=\frac{E}{D}$, $u = \dot t - a\dot r$, $v = c\dot{r}^2 + 2 b\dot t\dot r - w^2$ and $\vartheta=\vartheta(t,r)$ can be determined in terms of $k_i$ (see \cite{Cheraghchi:2022zgv} for the details).
	\item Exponential type $D=0, E\neq 0$,
	\begin{align}\label{eq:1-L3}
		L = \varphi(t,r) u^2 e^{\frac{v}{u^2}\mu}\,, 
	\end{align}
	where $\mu=\frac{F}{E}$, $u,v$ are as above and and $\varphi=\varphi(t,r)$ can be, again, determined in terms of $k_i$ (see \cite{Cheraghchi:2022zgv}).
	\item A function of one variable $D=E=F=0$,
	\begin{align}\label{eq:1-L5}
		L=u^2\Xi(z), \quad  z= \frac{v(\dot {\tilde  t}, \dot {\tilde r}, w)}{u(\dot {\tilde  t}, \dot {\tilde r}, w)^2}\,,
	\end{align}
	where $\Xi=\Xi(z)$ is an arbitrary function of the single variable $z$ and $u,v$ as defined above are, up to a suitable coordinate change, independent of  the new coordinates $\tilde t$ and $\tilde r$.
	\end{enumerate}
\item[II.] For connections with $k_7=k_8=k_9=k_{10}=0$:
\begin{enumerate}
	\item[4.] A function of another single variable,
	\begin{align}\label{eq:2-L2}
		L =  w^2 \xi(q)\,,\quad q = \frac{\dot t e^{I-\varphi}}{w}\,,
	\end{align}
	where $\xi=\xi(q)$ is an arbitrary function of the single variable $q$, $\varphi$ is a function of $t,r$ only and $I=I(t,r,p)$ is the integral of a certain rational function of $p=\frac{\dot{r}}{\dot{t}}$.
	\item[5.] Independent of $t$ and $r$ (up to a suitable coordinate change), function of two variables:
	\begin{align}\label{eq:2-L1}
		L = L(\dot t, \dot r, w) = \dot t^2 L(1, p, s)\,, \quad p = \frac{\dot r}{\dot t},\quad s = \frac{w}{\dot t}\,.
	\end{align}
\end{enumerate}
\end{itemize}
An immediate question is what kind of solutions of the canonical action-based Finsler gravity equations do exist in each of these classes - and if  a Birkhoff-type theorem holds.

\subsection{Berwald Finsler gravity}
One of the first Finsler generalizations of the vacuum Einstein equations, given by the vanishing of the Finsler-Ricci scalar $R=0$, had been proposed by Rutz \cite{Rutz}. It could however be shown that this equation cannot be obtained from the variation of an action functional. It turned out that the "closest" action-based vacuum Finsler gravity equation to $R=0$ (its so-called \textit{canonical variational completion}, see \cite{Hohmann:2018rpp}) is derived from the following action which can be formulated on the unit tangent (or indicatrix) bundle $I=\{(x,\dot x)\in\mathcal{A}\ |\ |L(x,\dot x)|=1\}$:
\begin{align}
	S_D[L] = \int_{D\subset I} \left( \frac{R}{L}\ d\Sigma \right)\,,
	\quad d\Sigma = \frac{|\det(g)|}{L^2} \mathbf{i}_{\dot x^a \dot \partial_a}(d^4x\wedge d^4\dot x) \,,
\end{align}
where $\mathbf{i}$ denotes the interior product between differential forms and vectors and the domain $D$ is compact. Technical details about the construction of the action and all mathematical subtleties can be found in \cite{Hohmann:2021zbt}.
Variation of the above action, supplemented by a source term given by the 1-particle distribution function $\phi$ of a kinetic gas, leads to:
\begin{align}
	g^{ab}\dot{\partial}_a\dot{\partial}_bR - 6 \frac{R}{L} + g^{ab}(\nabla_a S_b + S_a S_b +{\dot\partial}_a \nabla S_b)= \kappa \phi\,,
\end{align} 
where $\kappa$ is a constant and the symbols $\nabla_a$ and $\nabla$ denote the so-called horizontal Chern, respectively, the already encountered dynamical covariant derivative operators. As it will immediately turn out, these two operators are not needed for our further discussion, hence we do not discuss them here further. All details on the derivation of the Finsler gravity equation and its advantage over alternative suggestions for dynamics for Finsler spacetimes  can be found in \cite{Hohmann:2019sni,Hohmann:2021zbt}, and references therein.

This general equation is very involved  and difficult to solve. For Berwald spaces however, it simplifies tremendously, due to the vanishing of the mean Landsberg tensor $S_a$ and the simple $\dot x$-dependence of the Finsler Ricci scalar. The  vacuum field equation ($\phi=0$) reduces, in this case, to
\begin{align} \label{eq:vacuum_Berwald}
	g^{ab}(x,\dot x)\dot{\partial}_a\dot{\partial}_bR - 6 \frac{R}{L} = 0\,.
\end{align}
We call a pseudo-Finsler space \textit{Ricci-flat}, if its Finsler-Ricci scalar $R$ identically vanishes:
\begin{equation} \label{eq:R=0}
R=0.
\end{equation}
Obviously, \eqref{eq:R=0} is a sufficient condition for the vacuum equation \eqref{eq:vacuum_Berwald} to hold. Moreover, for \textit{proper Berwald spacetimes}, which are Berwald spacetimes such that $L$ extends smoothly from the cone of unit timelike directions to the null structure $L=0$, it has been proven, \cite{Javaloyes:2021jqw}, that this condition is also necessary, i.e., Finsler-Ricci flatness is actually \textit{equivalent} to the vacuum equation~\eqref{eq:vacuum_Berwald}.

Since in Berwald spaces the Finsler-Ricci scalar is quadratic in the velocities $R=R_{ab}(x)\dot{x}^a\dot{x}^b$ (where $R_{ab}$ denote the Ricci tensor components of its \textit{connection}) $R=0$ is equivalent to the vanishing of the symmetric part\footnote{For non-Riemannian Berwald spaces, the connection Ricci tensor $R_{ab}$ is not necessarily symmetric, see \cite{Fuster:2020upk}} of $R_{ab}$, that is, 
\begin{align}
\dot{\partial}_a\dot{\partial}_bR \equiv R_{ab}+R_{ba}=0.
\end{align}
	
It is still work in progress to answer the question if the condition $R_{ab}(x) = 0$ is necessary to solve the Finsler gravity equation for Berwald spacetimes, if one relaxes the smoothness request for $L$ on the light cone at each point $x \in M$ and only requires continuity instead. 

\section{Ricci flat vacuum solutions}
\label{sec:SO3BerwRicFlat}
Here is the main result of this paper, which we will prove over the next subsections:

\begin{theorem}[A Berwald Jebsen-Birkhoff Theorem]\label{thm:JB}
Let $(M,L)$ be a four-dimensional spatially spherically symmetric Berwald pseudo-Finsler space which is Finsler-Ricci flat, i.e., $R = 0$. Then $(M,L)$ is either flat, $R^a{}_{bc}=0$, or $L = g_{ab}(x)\dot x^a \dot x^b$ is the Finsler function of a pseudo-Riemannian metric $g_{ab}=g_{ab}(x)$ that is Ricci-flat, $R_{ab}=0$.
\end{theorem}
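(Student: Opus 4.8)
The plan is to combine the hypothesis $R=0$ with the classification of four-dimensional spatially spherically symmetric Berwald structures recalled in Section~\ref{sec:SO3BerwFins}. Any such structure is either pseudo-Riemannian, or genuinely Finslerian and then falls into one of the five explicit families \eqref{eq:1-L2}, \eqref{eq:1-L3}, \eqref{eq:1-L5}, \eqref{eq:2-L2}, \eqref{eq:2-L1}, for each of which the canonical affine connection $\Gamma$ and its curvature are known in closed form, the curvature being encoded in the functions $a_i=a_i(t,r)$ subject to the structural relations \eqref{metrizability_conds2}, \eqref{a5_identity} and \eqref{A-F_defdef}. Since in a Berwald space $R=R_{ab}(x)\dot x^a\dot x^b$, the hypothesis is equivalent to the vanishing of the symmetric part $R_{(ab)}$ of the connection Ricci tensor; and $R^a{}_{bc}=0$ for all $\dot x$ precisely when the full Riemann tensor of $\Gamma$ vanishes, i.e. when $\Gamma$ is flat. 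So it suffices to prove: (i) if $L$ is pseudo-Riemannian, $R=0$ already forces $R_{ab}=0$; (ii) in each of the five Finslerian classes, $R_{(ab)}=0$ together with the structural constraints forces $R^a{}_{bc}=0$.

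Claim (i) is immediate, since for $L=g_{ab}(x)\dot x^a\dot x^b$ the connection $\Gamma$ is the Levi-Civita connection, $R_{ab}$ is symmetric, and $R_{(ab)}=R_{ab}$; this is the second alternative of the theorem. For (ii) I would work class by class, organised by the discriminants $D,E,F$ of \eqref{def_DEF}. For each class one substitutes the explicit connection and curvature data (Appendix~\ref{app:defs}) into the tensorial equation $R_{(ab)}=0$, producing a system of algebraic and differential relations among the $k_i$ and $a_i$; combined with \eqref{metrizability_conds2}, \eqref{a5_identity} and \eqref{A-F_defdef}, one expects the curvature to collapse entirely. In the power-law class \eqref{eq:1-L2} ($D\neq0$) and the exponential class \eqref{eq:1-L3} ($D=0$, $E\neq0$), the exponents $\lambda=F/D$, $\rho=E/D$, $\mu=F/E$ are themselves built from curvature data, so the key point is to show that $R_{(ab)}$ cannot vanish unless either the parameters degenerate to the values for which $L$ is actually quadratic (and these belong to the pseudo-Riemannian alternative) or $a_7=a_{11}=0$ and $a_1=\dots=a_5=0$, hence -- via \eqref{metrizability_conds2} -- all $a_i=0$. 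The remaining classes \eqref{eq:1-L5} ($D=E=F=0$) and \eqref{eq:2-L2}, \eqref{eq:2-L1} (where $k_7=k_8=k_9=k_{10}=0$) are handled the same way with their own, somewhat simpler, connection data.

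The main obstacle is not only the bookkeeping but, more substantively, controlling the \emph{antisymmetric} part of the Ricci tensor: for non-Riemannian Berwald spaces $R_{ab}$ need not be symmetric, so $R=0$ only annihilates $R_{(ab)}$, and one must show that the metrizability-type constraints \eqref{metrizability_conds2}--\eqref{A-F_defdef} then force $R_{[ab]}$, and with it every component of $R^a{}_{bc}$, to vanish as well -- i.e. that no curved, Ricci-flat, genuinely Finslerian solution survives in those extra components. A secondary issue requiring care is the interface between the classes: certain special parameter values turn a nominally Finslerian $L$ into a quadratic form, and these must be routed to the pseudo-Riemannian branch rather than declared flat, while the complementary, truly Finslerian parameter ranges must be shown incompatible with $R_{(ab)}=0$ except when $R^a{}_{bc}=0$.

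Once (i) and (ii) are established, the dichotomy of Theorem~\ref{thm:JB} follows. As a closing remark, on the pseudo-Riemannian branch $g_{ab}$ is a Ricci-flat, spatially spherically symmetric Lorentzian metric, so the classical Jebsen-Birkhoff theorem identifies it with the Schwarzschild family; this is how the theorem upgrades to a genuine Birkhoff statement for Berwald spacetimes.
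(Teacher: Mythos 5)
Your strategy is the same as the paper's: invoke the classification of $SO(3)$-symmetric Berwald structures, note that $R=0$ kills only the symmetric part $R_{(ab)}$ of the connection Ricci tensor, and then run a case-by-case elimination over the five nontrivially Finslerian classes, with the pseudo-Riemannian alternative handled trivially. Your point (i) and your identification of the antisymmetric part of $R_{ab}$ as the substantive obstacle are both correct. However, the proposal stops exactly where the proof begins: phrases like ``one expects the curvature to collapse entirely'' and ``the key point is to show that $R_{(ab)}$ cannot vanish unless\dots'' defer the entire content of the argument. The paper's proof consists of (a) a lemma reducing $R=0$ to an explicit four-equation system in the curvature coefficients $a_i$ (namely $a_3=-2ba_7$, $a_1-a_4-2a_5=4ba_{11}$, $a_2=2(ab+c)a_{11}$, $aa_7+a_{11}=a_{14}$), (b) the algebraic identity $a_{14}=1+k_{10}^2(2ab+c)$ together with its $t$- and $r$-derivatives, and (c) the explicit resolution of that system in each class. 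Ingredient (b) is the one genuinely non-obvious tool -- it is what makes classes 4 and 5 (where $a_{14}=1$ but the system demands $aa_7+a_{11}=a_{14}$ with $a_7=a_{11}=0$) collapse instantly, and what drives the contradictions in the power-law and exponential classes -- and your proposal does not identify it or any substitute for it.

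There is also a substantive mismatch in the outcome you anticipate. You expect that within the power-law and exponential classes the equation $R_{(ab)}=0$ forces either degeneration to a quadratic $L$ (routed to the pseudo-Riemannian branch) or full flatness. What actually happens is different: classes 1, 2, 4 and 5 contain \emph{no} Ricci-flat members at all -- in each subcase the system \eqref{eq:R=0_system} combined with \eqref{metrizability_conds2}--\eqref{A-F_defdef} yields an outright contradiction with the class-defining condition ($D\neq0$, $E\neq0$, or nondegeneracy of $g$), so these classes are simply empty under $R=0$. Only class 3 ($D=E=F=0$) survives, and there the system forces all $a_i=0$, i.e.\ flatness. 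The pseudo-Riemannian alternative of the theorem does not arise by parameter degeneration inside the five classes; it is the complementary branch of the classification (the case where $L$ is quadratic to begin with). Your ``interface between the classes'' worry is therefore moot, but the elimination arguments you have not supplied are the whole proof.
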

In Lorentzian signature, the above result has two immediate, yet, important consequences.

\begin{corollary}
	The only non-flat, Finsler-Ricci flat and spatially spherically symmetric Berwald spacetime geometries are given by the Finsler function of the Schwarzschild metric.
\end{corollary}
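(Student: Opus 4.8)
The plan is to read off the Corollary from Theorem~\ref{thm:JB} and the classical Jebsen-Birkhoff theorem. First I would specialize Theorem~\ref{thm:JB} to Lorentzian signature, so that $(M,L)$ is a Berwald \emph{spacetime}, and discard the flat alternative $R^a{}_{bc}=0$, excluded by the ``non-flat'' hypothesis. What survives is the second alternative: $L = g_{ab}(x)\dot x^a\dot x^b$ with $g$ a Lorentzian metric satisfying $R_{ab}=0$. Since $L$ is then quadratic in $\dot x$, its canonical affine connection is the Levi-Civita connection of $g$ and the Finslerian curvature $R^a{}_{bc}$ is (a contraction with $\dot x$ of) the Riemann curvature of $g$; also the three $\mathfrak{so}(3)$-generating Killing fields become Killing fields of $g$ in the usual pseudo-Riemannian sense, since $X^C(L)=(\mathcal{L}_X g)_{ab}\dot x^a\dot x^b$ when $L$ is quadratic. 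Hence $g$ is a spherically symmetric, Ricci-flat Lorentzian $4$-metric with non-vanishing Riemann tensor. The classical Jebsen-Birkhoff theorem~\cite{VojeJohansen:2005nd,1923Birk} then identifies $g$ with a member of the Schwarzschild family; the zero-mass member is flat Minkowski space and is ruled out by non-flatness (equivalently, by non-vanishing of the Kretschmann invariant, a diffeomorphism invariant separating Schwarzschild of nonzero mass from Minkowski). So $g$ is a Schwarzschild metric and $L = g^{\mathrm{Schw}}_{ab}(x)\dot x^a\dot x^b$, as claimed. Granting Theorem~\ref{thm:JB}, this deduction has no real obstacle; the only minor points are to reconcile the present definition of spatial spherical symmetry (existence of three Killing fields spanning $\mathfrak{so}(3)$) with the hypotheses under which classical Birkhoff is usually phrased, and to confirm that ``non-flat'' is the right way to exclude Minkowski space.

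Since the genuine content is Theorem~\ref{thm:JB}, let me also sketch how I would establish it. If $L$ is already pseudo-Riemannian, then $R = R_{ab}(x)\dot x^a\dot x^b$ with $R_{ab}$ the symmetric Levi-Civita Ricci tensor, so $R=0$ means $R_{ab}=0$ and we are in the second alternative. Otherwise $L$ is nontrivially Finslerian, hence -- by the classification of~\cite{Cheraghchi:2022zgv} recalled above -- it is one of the five types \eqref{eq:1-L2}--\eqref{eq:2-L1}, and it suffices to show that in each of them $R = 0$, equivalently the vanishing of the symmetric part of the connection Ricci tensor, forces $R^a{}_{bc}=0$. Concretely, I would contract the curvature tensor $R^a{}_{bc}$ of $\Gamma$ (whose components in terms of the $14$ coefficients $a_i(t,r)$ are listed in Appendix~\ref{app:defs}), symmetrize to get the linear combinations of $a_i$ that must vanish, and feed these into the metrizability constraints \eqref{metrizability_conds2}, \eqref{a5_identity}, \eqref{A-F_defdef} together with the class-defining data $D,E,F$ (equivalently the constants $\lambda,\mu$). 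The goal is to show, class by class, that the combined system admits only the trivial solution $a_i=0$, i.e. $R^a{}_{bc}=0$.

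I expect the main obstacle to be the power-law class \eqref{eq:1-L2}: there the exponent $\lambda=F/D$ is a genuine constant in the ``Finslerian'' range $\lambda\notin\{0,1\}$ ($\lambda=0$ being degenerate and $\lambda=1$ pseudo-Riemannian), and after using metrizability one is left with a coupled linear system in (essentially) $a_1,a_7,a_{11}$ whose coefficients depend polynomially on $\lambda$ and on $a,b,c$, so that a careful disposal of the degenerate sub-cases ($b=0$, $c=0$, $ab+c=0$, $a_7=0$ or $a_{11}=0$, and so on) is needed before concluding. The exponential class \eqref{eq:1-L3} is analogous with the parameter $\mu$. In the three remaining classes the Finsler function carries an arbitrary one- or two-variable function, but this does not enter the canonical connection, so there one only needs that the affine connections singled out by $D=E=F=0$ (class~3) and by $k_7=\dots=k_{10}=0$ (classes~4 and~5) already carry enough built-in vanishing -- e.g. $D=F=0$ gives $a_5=0$ and $a_1=a\,a_3$, and $E=0$ gives $b\,a_3=0$ -- for the metrizability relations plus the vanishing of the symmetric Ricci part to propagate to $R^a{}_{bc}=0$. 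A secondary difficulty throughout is that the $a_i$ and $k_i$ are not independent: they satisfy the PDEs making $\Gamma$ a Berwald connection of the prescribed form, and these integrability conditions, rather than pure algebra, may be needed to close a few sub-cases.
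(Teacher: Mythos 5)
Your first paragraph is exactly the paper's argument: the corollary is deduced by specializing Theorem~\ref{thm:JB} to Lorentzian signature, discarding the flat branch by hypothesis, and applying the classical Jebsen-Birkhoff theorem to the resulting Ricci-flat, spherically symmetric pseudo-Riemannian metric (the paper's own proof is a one-sentence version of this). Your additional remarks on Killing fields, the zero-mass Schwarzschild member, and the sketch of Theorem~\ref{thm:JB} itself are consistent with the paper and do not change the route.
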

The corollary follows trivially from Theorem \ref{thm:JB}, since non-flat but Finsler-Ricci flat Berwald spacetimes are pseudo-Riemannian, and for Ricci flat pseudo-Riemannian spacetimes, the classical Jebsen-Birkhoff theorem holds.\\

Moreover, for the class of \textit{proper} Finsler spacetimes, we can make an even stronger statement:
\begin{corollary}
If a proper, non-flat spatially spherically symmetric Berwald spacetime is a solution of the Finsler vacuum equation \eqref{eq:vacuum_Berwald}, then it is the Schwarzschild spacetime.
\end{corollary}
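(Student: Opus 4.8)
The plan is to chain together three facts already established above, so the argument is essentially a bookkeeping of logical implications rather than a fresh computation. The statement to prove concerns a \emph{proper} Berwald spacetime that is non-flat and solves the vacuum equation \eqref{eq:vacuum_Berwald}. My first step would be to invoke the equivalence established in \cite{Javaloyes:2021jqw}, recalled just before the theorem: for proper Berwald spacetimes, solving the vacuum field equation \eqref{eq:vacuum_Berwald} is equivalent to Finsler-Ricci flatness \eqref{eq:R=0}. Thus the hypothesis that the spacetime is a proper solution of \eqref{eq:vacuum_Berwald} immediately upgrades to $R=0$, placing us squarely in the hypotheses of Theorem \ref{thm:JB}.

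Next I would apply Theorem \ref{thm:JB} itself. Its conclusion is a dichotomy: such a space is either flat, $R^a{}_{bc}=0$, or else $L=g_{ab}(x)\dot x^a\dot x^b$ arises from a Ricci-flat pseudo-Riemannian metric. Since the corollary assumes the spacetime is \emph{non-flat}, the first horn of the dichotomy is excluded by hypothesis, and we are forced into the second: the Finsler structure is that of a genuine Lorentzian metric $g_{ab}(x)$ satisfying the ordinary vacuum Einstein equations $R_{ab}=0$. At this point the Finslerian character has been completely removed, and we have reduced the problem to a question in standard pseudo-Riemannian geometry.

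The final step is to feed this reduced problem into the classical Jebsen-Birkhoff theorem \cite{VojeJohansen:2005nd,1923Birk}. The metric $g_{ab}$ inherits spatial spherical symmetry from the original Finsler structure, because the $SO(3)$ Killing fields of $(M,L)$ act on the base manifold and, once $L$ is quadratic, these are precisely isometries of $g_{ab}$; moreover $g_{ab}$ is Ricci-flat. A spherically symmetric, Ricci-flat Lorentzian metric in four dimensions is, by Birkhoff's theorem, the Schwarzschild metric (and in particular is automatically static and asymptotically flat). Hence the Finsler function $L$ is the Finsler function of the Schwarzschild metric, which is the desired conclusion.

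I do not expect any genuine obstacle here, since each of the three ingredients is either assumed available from the excerpt (the equivalence of \cite{Javaloyes:2021jqw} and Theorem \ref{thm:JB}) or is a textbook result (classical Birkhoff). The only point requiring a word of care is the transfer of spherical symmetry from the Finsler level to the metric level in the second horn: one should note that the three $\mathfrak{so}(3)$ generators $X_i$, whose complete lifts $X_i^C$ annihilate $L$, restrict to Killing vector fields of $g_{ab}$ once $L=g_{ab}\dot x^a\dot x^b$, so that the classical Birkhoff hypotheses are exactly met. This is immediate but worth stating explicitly so that the invocation of the classical theorem is unambiguous.
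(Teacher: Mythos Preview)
Your proposal is correct and follows exactly the route the paper intends: use the equivalence from \cite{Javaloyes:2021jqw} to reduce the vacuum equation to $R=0$, apply Theorem~\ref{thm:JB} together with the non-flatness hypothesis to obtain a Ricci-flat Lorentzian metric, and then invoke the classical Jebsen-Birkhoff theorem. The paper presents this corollary as an immediate consequence of precisely these ingredients, without spelling out the chain of implications as explicitly as you do.
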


\vspace{6pt}

To prove Theorem \ref{thm:JB}, we rely on the local classification of spherically symmetric Berwald spacetimes in \cite{Cheraghchi:2022zgv}, which we have briefly presented above. The strategy is the following: after proving two preliminary lemmas, we will show, for each of the possible classes of non-Riemannian $SO(3)$-symmetric Berwald structures separately, that the equation $R=0$ has either no solutions at all, or leads to the vanishing of the curvature tensor components $R^a{}_{bcd}=0$ of the spherically symmetric affine connection.

\subsection{Preliminary lemmas}
Here are two lemmas underlying our proof. 
A first step is to rewrite the Ricci flatness condition in a convenient way, as follows.

\begin{lemma} 
	\label{lem:Ric}
An $SO(3)$-symmetric 4-dimensional Berwald pseudo-Finsler space is Finsler-Ricci flat if and only if its canonical connection satisfies:
\begin{equation}
\left\{ 
\begin{array}{l}
a_{3}=-2ba_{7} \\ 
a_{1}-a_{4}-2a_{5}= 4ba_{11} \\ 
a_{2}=2\left( ab+c\right) a_{11} \\ 
aa_{7}+a_{11}=a_{14}
\end{array}
\right.   \label{eq:R=0_system}
\end{equation}
\end{lemma}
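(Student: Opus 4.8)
The plan is to compute the Finsler–Ricci scalar $R = R_{ab}(x)\dot x^a\dot x^b$ explicitly for a generic $SO(3)$-symmetric Berwald connection, using the 14 curvature coefficients $a_i=a_i(t,r)$ from the classification in \cite{Cheraghchi:2022zgv}, and then impose $R=0$ as an identity in the fibre coordinates $(\dot t,\dot r,w)$. First I would write out $R=R^a{}_{ab}\dot x^b$ in terms of the connection curvature, and re-express it as a quadratic form in the building blocks $u=\dot t - a\dot r$, $v = c\dot r^2 + 2b\dot t\dot r - w^2$ and $w^2$, which are the natural $SO(3)$-invariant combinations; the curvature constraints \eqref{metrizability_conds2}, the identity \eqref{a5_identity}, and the relations \eqref{A-F_defdef} already tell us that many of the $a_i$ are determined by $a_1,\dots,a_5,a_7,a_{11},a_{14}$ together with the ratios $a,b,c$, so the quadratic form $R$ should collapse to an expression in a small number of independent coefficients.

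Next I would collect $R$ according to the monomials in $(\dot t,\dot r,w)$ — equivalently according to independent products among $u^2$, $uv$... wait, $R$ is only degree two, so really according to $\{\dot t^2,\dot t\dot r,\dot r^2,w^2\}$ (the cross terms $\dot t w$, $\dot r w$ do not appear by sphericity). Setting each of these four independent coefficients to zero yields four scalar equations on the $a_i$. The claim is that, after using the already-known algebraic relations among the $a_i$, these four equations are exactly equivalent to the system \eqref{eq:R=0_system}: $a_3 = -2b a_7$, $a_1 - a_4 - 2a_5 = 4b a_{11}$, $a_2 = 2(ab+c)a_{11}$, and $aa_7 + a_{11} = a_{14}$. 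So the bulk of the proof is the bookkeeping: substitute the constraint relations, expand, and match coefficients.

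The main obstacle I anticipate is purely computational rather than conceptual: the curvature tensor $R^a{}_{bcd}$ of the spherically symmetric affine connection has many nonzero components parameterised through the $k_i(t,r)$, and forming the contraction $R^a{}_{ab}\dot x^b$ and then re-expressing everything in terms of $a,b,c$ and the reduced set of $a_i$ is bulky and error-prone. One has to be careful that the Ricci tensor $R_{ab}$ need not be symmetric for non-Riemannian Berwald spaces (as the footnote stresses), so $R=R_{ab}\dot x^a\dot x^b$ only sees the symmetric part; this is exactly why only four conditions emerge rather than more. I would organise the computation by first eliminating $a_6,a_8,a_9,a_{10},a_{12},a_{13}$ via \eqref{metrizability_conds2}, then $a_5$ is pinned by \eqref{a5_identity}, leaving $R$ as an explicit quadratic in $(\dot t,\dot r,w)$ with coefficients built from $a_1,a_2,a_3,a_4,a_7,a_{11},a_{14}$ and $a,b,c$; the four vanishing-coefficient equations are then read off and simplified using \eqref{A-F_defdef} to land on \eqref{eq:R=0_system}. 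The converse direction is immediate: substituting \eqref{eq:R=0_system} back makes the quadratic form vanish identically.
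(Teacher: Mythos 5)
Your proposal is correct and follows essentially the same route as the paper: compute $R$ explicitly as a quadratic form in $\dot t,\dot r, w$ (the $\dot t w$ and $\dot r w$ cross terms indeed being absent), set the four independent coefficients to zero, and eliminate $a_6,a_8,a_9,a_{12},a_{13}$ via \eqref{metrizability_conds2} together with \eqref{a5_identity} to arrive at \eqref{eq:R=0_system}. The only minor correction is that the relations \eqref{A-F_defdef} are not needed here — they enter only in the subsequent case-by-case analysis — the lemma follows from \eqref{metrizability_conds2} and \eqref{a5_identity} alone.
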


\begin{proof}
A direct calculation using the expressions \eqref{eq:appcon} and \eqref{eq:appcurv} of the curvature
coefficients of spherically symmetric Berwald spaces, gives the Finsler-Ricci scalar $R$ as:
\begin{equation}
R=-\left( a_{3}+2a_{8}\right) \dot{t}^{2}+\left( a_{1}-a_{4}-2\left(
a_{9}+a_{12}\right) \right) \dot{t}\dot{r}+\left( a_{2}-2a_{13}\right) \dot{r}^{2}+\left( a_{6}+a_{11}-a_{14}\right) w^{2}.  \label{eq:R}
\end{equation}
Since $a_{i}$ have no $\dot{x}$-dependence, we find that the equation $R=0$ is equivalent to the vanishing of each of the coefficients of $\dot{t}^2, \dot{t}\dot{r}, \dot{r}^2$ and $w^2$.

Taking into account the pseudo-Finsler metrizability conditions \eqref{metrizability_conds2} together with the identity \eqref{a5_identity}, this is nothing but the system \eqref{eq:R=0_system}. 
\end{proof}

A second useful result refers to $a_{14}$, which is the only curvature coefficient that is algebraic in $k_i$.

\begin{lemma}
	\label{lem:a14} If $k_{10}\not=0,$ then, there hold the identities:
	\begin{itemize}
		\item[(i)] $a_{14}=1+k_{10}^{2}\left( 2ab+c\right);$
		\item[(ii)] $\partial _{t}a_{14}=-2k_{10}(2ab+c)a_{7};$ $\partial_{r}a_{14}=-2k_{10}(2ab+c)a_{11}.$
	\end{itemize}
\end{lemma}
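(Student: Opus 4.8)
The plan is to obtain both identities by a direct computation from the appendix formulas \eqref{eq:appcon}--\eqref{eq:appcurv} for the connection and curvature coefficients, in the same spirit as the proof of Lemma~\ref{lem:Ric}: part~(i) is an algebraic simplification, and part~(ii) follows by differentiating~(i) and re-expressing the outcome through the appendix formulas for $a_7$ and $a_{11}$. No new idea is required beyond those formulas; what has to be controlled is bookkeeping.

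For~(i): among the fourteen curvature coefficients, $a_{14}$ is the one attached to the purely angular block of $R^a{}_{bcd}$ --- schematically a component of the type $R^\theta{}_{\phi\theta\phi}$. There the only terms that carry a $\theta$-derivative come from the round-sphere part of $\Gamma$ ($\Gamma^\theta{}_{\phi\phi}=-\sin\theta\cos\theta$, $\Gamma^\phi{}_{\theta\phi}=\cot\theta$, \dots), which are constant in $(t,r)$ and recombine into the flat value $1$; everything else is a quadratic $\Gamma\Gamma$ cross-term built from the genuinely Finslerian angular coefficients $k_7,\dots,k_{10}$. Reading this off \eqref{eq:appcurv} gives $a_{14}=1+k_7k_8+k_9k_{10}$, with no derivative of any $k_i$ --- which is precisely why $a_{14}$ is the only curvature coefficient algebraic in the $k_i$. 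Substituting $a=k_7/k_{10}$, $b=k_8/k_{10}$, $c=(k_9k_{10}-k_7k_8)/k_{10}^2$ one computes $k_{10}^2(2ab+c)=2k_7k_8+k_9k_{10}-k_7k_8=k_7k_8+k_9k_{10}$, hence $a_{14}=1+k_{10}^2(2ab+c)$; the hypothesis $k_{10}\neq0$ is used here only to make sense of $a,b,c$.

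For~(ii): differentiating the algebraic identity $a_{14}=1+k_7k_8+k_9k_{10}$ gives $\partial_t a_{14}=(\partial_t k_7)k_8+k_7(\partial_t k_8)+(\partial_t k_9)k_{10}+k_9(\partial_t k_{10})$, and the analogous expression with $\partial_t\to\partial_r$. It then remains to match these four-term expressions against the appendix formulas for $a_7$ and $a_{11}$, which --- up to the common algebraic prefactor $-2(k_7k_8+k_9k_{10})/k_{10}=-2k_{10}(2ab+c)$ --- are exactly the $t$- and $r$-derivative combinations of the angular $k_i$ entering the curvature; this yields $\partial_t a_{14}=-2k_{10}(2ab+c)a_7$ and $\partial_r a_{14}=-2k_{10}(2ab+c)a_{11}$, using (i) in the last step and $k_{10}\neq0$ once more to divide by $k_{10}$. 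The main obstacle is entirely in this matching: one must verify that the naive derivative of the closed-form expression for $a_{14}$ reorganises precisely into the curvature coefficients $a_7$, $a_{11}$ times the stated factor, and it is here --- not in~(i) --- that the structure of a metrizable (Berwald) spherically symmetric connection, rather than of an arbitrary affine connection, is actually exploited, through the same differential relations among the $k_i$ that underlie \eqref{metrizability_conds2}; everything else is just the volume and opacity of the appendix expressions.
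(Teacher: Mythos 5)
Your proposal follows the same route as the paper: part (i) is the algebraic identity $a_{14}=1+k_7k_8+k_9k_{10}$ combined with $k_{10}^2(2ab+c)=k_7k_8+k_9k_{10}$, and your computation there is complete and correct. For part (ii) your strategy is also the paper's (differentiate the algebraic identity, then reorganise using the appendix formulas and the metrizability conditions), but you stop exactly where the actual content of the lemma lies: the ``matching'' you defer is the whole proof of (ii), and your description of how it works is slightly off. The coefficient $a_7$ contains only the single derivative $k_{10,t}$, not ``the $t$-derivative combination of the angular $k_i$''; the derivatives $k_{7,t},k_{8,t},k_{9,t}$ sit in $a_6,a_8,a_9$ respectively. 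The two steps you would need to actually write down are: first, that all quadratic $k_ik_jk_l$ terms cancel, so that
\begin{equation*}
\partial_t a_{14}=k_8\partial_t k_7+k_7\partial_t k_8+k_{10}\partial_t k_9+k_9\partial_t k_{10}
=-\left(k_8a_6+k_7a_8+k_{10}a_9+k_9a_7\right),
\end{equation*}
which uses only the definitions \eqref{a_i}; and second, that the metrizability conditions \eqref{metrizability_conds2} ($a_6=aa_7$, $a_8=ba_7$, $a_9=(ab+c)a_7$) together with $k_7=ak_{10}$, $k_8=bk_{10}$, $k_9=(ab+c)k_{10}$ collapse the bracket to $2k_{10}(2ab+c)a_7$ (and analogously with $a_{10},a_{11},a_{12},a_{13}$ for the $r$-derivative). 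Both steps do go through, so nothing in your plan fails, but as written the proposal asserts rather than proves the identity in (ii); carrying out the cancellation and the substitution explicitly would close the gap.
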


\begin{proof}
	\begin{itemize}
		\item[(i)] follows immediately from the identity
		\begin{equation}
			a_{14}=1+k_{7}k_{8}+k_{9}k_{10}  \label{def:a14}
		\end{equation}
		(see Appendix \ref{app:defs}) and from the definitions of $a,b$ and $c$.
		\item[(ii)]Differentiating relation (\ref{def:a14}) with respect to $t,$
		we find:
		\begin{equation}
			\partial _{t}a_{14}=k_{8}\partial _{t}k_{7}+k_{7}\partial
			_{t}k_{8}+k_{10}\partial _{t}k_{9}+k_{9}\partial _{t}k_{10}=-\left(
			k_{8}a_{6}+k_{7}a_{8}+k_{10}a_{9}+k_{9}a_{7}\right) ,
		\end{equation}
		where to prove the second equality, we have used relations \eqref{a_i} in the
		Appendix. Then, taking $k_{10}$ as a common factor and using:
		\begin{equation}
			k_{8}=bk_{10},~\ k_{7}=ak_{10},~\ \ k_{9}=\left( ab+c\right) k_{10},
		\end{equation}
		together with the Finsler metrizability conditions \eqref{metrizability_conds2}, this becomes:
		\begin{equation}
			\partial _{t}a_{14}=-2k_{10}(2ab+c)a_{7}
		\end{equation}
		as claimed. The second identity of \textit{(ii)} follows in a completely
		similar manner.
	\end{itemize}
\end{proof}

\bigskip
We are now ready to prove Theorem \ref{thm:JB} - that is, to solve the system \eqref{eq:R=0_system} - case by case.

\subsection{Class 1:\ Power law metrics $D \neq 0$}

A first useful remark is that $b$ and $c$ cannot simultaneously
vanish; this is justified by direct computation of $\det(g)$ - which, in the case $b=c=0,$ identically vanishes  (see also \cite{Cheraghchi:2022zgv}). An immediate consequence is that $b$ and $ab+c$ cannot be zero at the same time.

Taking into account this remark, we first solve the constraints $A=B=C=0$ (see \eqref{A-F_defdef}). A direct, elementary computation leads to three possibilities:
\begin{eqnarray}
	\left( I\right)  &:&b\not=0,~2ab+c\not=0\Rightarrow ~a_{1}=a_{5}-\dfrac{ab+c}{b}a_{3},~ a_{2}= \dfrac{a(ab+c)a_3}{b} ~\ a_{4}=a_{5}-aa_{3},~\ a_{3},a_{5}\in \mathbb{R}; \label{eq:I}\\
	(II) &:&b\not=0,~2ab+c=0\Rightarrow
	~a_{1}=-a_{4}+2a_{5},~a_{2}=a^{2}a_{3}+2a\left( a_{4}-a_{5}\right)
	,~a_{3},a_{4},a_{5}\in \mathbb{R}; \label{eq:II}\\
	(III) &:&b=0,~a_{2}=-aa_{1}+aa_{5},~a_{3}=0,~a_{4}=a_{5},~\ a_{1},a_{5} \in \mathbb{R}. \label{eq:III}
\end{eqnarray}

Let us investigate separately each of these possibilities.

\subsubsection{Case (I):\ $b\not=0,$ $c+2ab\not=0$}
The defining relations \eqref{eq:I}, the identity $a_{5}=\left( ab+c\right) a_{7}-ba_{11}$ and the first
equation \eqref{eq:R=0_system} allow us to obtain $a_{1},a_{2},a_{4}$ in terms of $a_{7}$ and 
$a_{11}$ as:
\begin{equation}
a_{1}=3\left( c+ab\right) a_{7}-ba_{11},a_{2}=-2a\left( c+ab\right)
a_{7},a_{4}=\left( c+3ab\right) a_{7}-ba_{11},  \label{eq:a1a2a4_I}
\end{equation}
which, substituted into the second and into the third equations  \eqref{eq:R=0_system} give respectively:
\begin{equation}
b\left( aa_{7}+a_{11}\right) =0,~\ \ \left( ab+c\right) \left(
aa_{7}+a_{11}\right) =0.
\end{equation}
Since, as noted above, $b$ and $ab+c$ cannot simultaneously vanish, this
leads to: $aa_{7}+a_{11}=0,$ which, using now the fourth equation \eqref{eq:R=0_system}, tells
us that
\begin{equation}
a_{14}=0.
\end{equation}
But, the latter, taking into account Lemma \ref{lem:a14} \textit{(ii)} and $k_{10}\neq 0, 2ab+c\neq 0$, implies that $
a_{7}=a_{11}=0,$ which, together with (\ref{eq:a1a2a4_I}) gives: 
\begin{equation}
a_{1}=a_{2}=a_{3}=a_{4}=a_{5}=0;
\end{equation}
in particular, $D=aa_{3}-a_{1}+a_{5}=0,$ in contradiction with the hypothesis $D\not=0$ which must hold for all power law metrics.

Therefore, this subclass contains no Ricci-flat Finsler functions.

\subsubsection{Case (II):\ $c=-2ab,$ $b\not=0$}\label{ssec:II}
In this case, Lemma \ref{lem:a14} together with $2ab+c=0$ gives:
\begin{equation}
a_{14}=1,
\end{equation}
which, using the fourth equation \eqref{eq:R=0_system}, then tells us that:
\begin{equation}
aa_{7}+a_{11}=1
\end{equation}
and accordingly, 
\begin{equation}
a_{5}=-b\left( aa_{7}+a_{11}\right) =-b.
\end{equation}
The latter, together with the first equation of \eqref{eq:R=0_system} and \eqref{eq:II}, allows us to express $a_{1}$
and $a_{2}$ as: 
\begin{equation}
a_{1}=-a_{4}-2b,~\ \ a_{2}=2a (b a_{11}+a_{4}).
\end{equation}
The remaining equations of \eqref{eq:R=0_system} then both give:
\begin{equation}
a_{4}=-2ba_{11}
\end{equation}
Returning to $a_{1},$ we find: $a_{1}=2b\left( a_{11}-1\right) =-2aba_{7}$
and finally,
\begin{equation}
F=aa_{3}-a_{1}=-2aba_{7}+2aba_{7}=0.
\end{equation}
But, the latter leads to $\lambda =\dfrac{F}{D}=0$. The latter implies that $L$ only depends on $u$, hence only on $\dot t$ and $\dot r$ (and is completely independent of $\dot \theta$ and $\dot \phi$), meaning that $L$ is actually degenerate.

We conclude that neither this subclass contains any nondegenerate solution of the equation $R=0$.

\subsubsection{Case (III): $b=0,$ $2ab+c\not=0$}

In this case, the identity $a_{5}=\left( ab+c\right) a_{7}-ba_{11}$ together
with the defining relations \eqref{eq:III} give:
\begin{equation}
a_{4}=a_{5}=ca_{7},~\ \ a_{3}=0,~\ a_{2}=a\left( -a_{1}+ca_{7}\right) .
\end{equation}
The first equation \eqref{eq:R=0_system} is thus trivially satisfied; the second one gives $
a_{1}=3ca_{7}$, which, substituted into the third one then reveals that
\begin{equation}
-2c\left( a_{11}+aa_{7}\right) =0.
\end{equation}
Since $b$ and $c$ cannot simultaneously vanish, this implies:
\begin{equation}
a_{14}=a_{11}+aa_{7}=0
\end{equation}
and therefore, by Lemma \ref{lem:a14}, we find $a_{7}=a_{11}=0;$ this leads
again, to 
\begin{equation}
a_{1}=a_{2}=...=a_{5}=0,
\end{equation}
in contradiction with $D\not=0$. Therefore, this subclass does not contain any Finsler-Ricci flat structures either.

Summing up, we are led to the following conclusion:

\begin{conclusion}
	There is no nontrivially Finslerian, power law Berwald metric $L$ obeying $R=0.$
\end{conclusion}

\subsection{Class 2:\ exponential type metrics $D=0, E\neq 0$}

A first remark:\ in this case, $E=ba_{3}\not=0,$ which means that both $b$ and $a_{3}$ must be nonzero. Hence, we must solve the system $A=B=C=D=0,$ under this condition. The only possible solution then is
\begin{equation}
2ab+c=0,~\ a_{1}=aa_{3}+a_{5},~\ a_{2}=-a^{2}a_{3},a_{4}=-aa_{3}+a_{5}\,.
\end{equation}
In a completely similar manner to Class 1 case (II) (see paragraph \ref{ssec:II} above), we find from Lemma \ref{lem:a14}, the equality $2ab+c=0$ and the fourth equation \eqref{eq:R=0_system}, that
\begin{equation}
a_{14}=aa_{7}+a_{11}=1,~\ \ a_{5}=-b\,.
\end{equation}
The first equation \eqref{eq:R=0_system} says that $a_{3}=-2ba_{7}$, and thus
\begin{equation}
a_{1}=-b\left( 2aa_{7}+1\right) ,~\ a_{2}=2a^{2}ba_{7},~a_{4}=b\left(
2aa_{7}-1\right) .
\end{equation}
The second equation \eqref{eq:R=0_system} now reads:
\begin{equation}
-2b\left( 2a_{11}+2aa_{7}-1\right) =0,
\end{equation}
which, taking into account that $aa_{7}+a_{11}=1,$ implies:
\begin{equation}
b=0,
\end{equation}
that is, $E=ba_{3}=0$, which is in contradiction with our hypothesis, that is, the system is inconsistent.

\begin{conclusion}
	There is no exponential-type Berwald, spatially spherically
	symmetric metric obeying $R=0.$
\end{conclusion}

\subsection{Class 3: $D=E=F=0$}
For this case, see again \cite{Cheraghchi:2022zgv}, the condition $\left[ \delta_{t},\delta _{r}\right] =0$ holds and  is equivalent to:
\begin{equation*}
	a_{1}=a_{2}=...=a_{5}=0.
\end{equation*}
Thus, the equation $R=0$ (equivalently, the system (\ref{eq:R=0_system}))\
reads
\begin{equation}
	ba_{7}=0,~\ \ ba_{11}=0,~\ \left( ab+c\right) a_{11}=0,~\ \ a_{14}=aa_{7}+a_{11}.
	\label{a7a11_flat_case}
\end{equation}
Since, as already mentioned above, $b$ and $c$ cannot
simultaneously vanish (as this would lead to a degenerate metric), we must always have $a_{11}=0,$ which, combined with (\ref{a5_identity}) then gives:
\begin{equation*}
	\left( ab+c\right) a_{7}=0;
\end{equation*}
this, together with the first equation (\ref{a7a11_flat_case}) tells us that $a_{7}=0$ and therefore: 
\begin{equation*}
	a_{1}=a_{2}=...=a_{13}=a_{14}=0,
\end{equation*}
in other words the canonical connection of $L$ must be flat.

\begin{conclusion}
	Any Ricci-flat, Berwald spatially symmetric spacetime obeying $\left[ \delta
	_{t},\delta _{r}\right] =0$, is flat.
\end{conclusion}

\subsection{Classes 4 and 5:\  $k_{7}=k_{8}=k_{9}=k_{10}=0$}

For these metrics, the hypothesis $k_{7}=k_{8}=k_{9}=k_{10}=0,$ together with the expressions of the curvature components $a_i$ in Appendix \ref{app:defs}, give:
\begin{equation*}
	a_{5}=...=a_{13}=0\,,\quad a_{14}=1.
\end{equation*}
In particular, we obtain a contradiction to the fourth equation \eqref{eq:R=0_system}
\begin{equation*}
	aa_{7}+a_{11}-a_{14}=-1\not=0.
\end{equation*}
We thus obtain:
\begin{conclusion}
There are no Ricci-flat metrics belonging to Classes 4 or 5. 
\end{conclusion} 

The statement of Theorem \ref{thm:JB} then follows immediately from the Conclusions of all the above subsections.

\section{Conclusion}\label{sec:conc}
In our main result, Theorem \ref{thm:JB}, we have shown that for $SO(3)$-symmetric Berwald spacetimes (Finsler spacetimes closest to pseudo-Riemannian geometry) Finsler-Ricci flatness $R=R_{ab}(x)\dot x^a \dot x^b = 0$ implies flatness or pseudo-Riemannian geometry. Since for the latter, the Jebsen-Birkhoff theorem holds, all Finsler-Ricci flat but non-flat Berwald spacetimes are given by the Finsler function of the Schwarzschild metric
\begin{align}
	L = -(1-r_s/r)\dot t^2 + (1-r_s/r)^{-1} \dot r^2 + r^2 (\dot \theta^2 + \sin^2\theta \dot \phi^2)\,.
\end{align}
With this finding, we extended the Jebsen-Birkhoff  theorem stating that spatial spherical symmetry together with Ricci flatness imply Schwarzschild geometry, to Berwald spacetimes. 

In the context of general relativity, Ricci flatness is equivalent to the vacuum gravitational field equations - the vacuum Einstein equations. In Finsler geometry, the gravitational field equation is still equivalent to Ricci flatness  for \textit{proper} Berwald-Finsler spacetimes, that are defined by the fact that the Finsler function is smooth on the light cone. In general in modified gravity theories and also for general Berwald-Finsler spacetimes, this relation between Ricci flatness  and the vacuum field equations does not necessarily hold.

Currently ongoing research explores the existence of spherically symmetric Berwald spacetime solutions to the Finsler gravity vacuum equation that  are non-Ricci flat. These solutions, if any, would be deviations from the pseudo-Riemmanian Finsler function induced by the Schwarzschild metric and necessarily only continuous (i.e., not smooth) on the light cone. In either case (existence or non-existence of non Ricci flat solutions) this will be an important insight into the classification of spherically symmetric solutions to the Finsler gravity equation.

\appendix

\section{Connection coefficients and curvature components}\label{app:defs}
For the proof of Theorem~\ref{thm:JB} we need the following notions introduced in \cite{Cheraghchi:2022zgv}.

The most general $SO(3)$-invariant, symmetric affine connection is given, in 4 dimensions, by the coefficients:
\begin{align}
	\Gamma_{tt}^{t} &=k_{1}(t,r), & 
	\Gamma_{tr}^{t} &=k_{2}(t,r), \nonumber\\
	\Gamma_{rr}^{t} &=k_{3}(t,r), &
	\Gamma_{tt}^{r} &=k_{4}(t,r), \nonumber\\
	\Gamma_{rr}^{r} &=k_{5}(t,r), & 
	\Gamma_{tr}^{r} &=k_{6}(t,r), \nonumber\\
	\Gamma_{\theta \theta }^{t} &=\tfrac{\Gamma _{\phi \phi }^{t}}{\sin ^{2}\theta }=k_{7}(t,r), & 
	\Gamma _{\phi t}^{\phi }    &=\Gamma _{\theta t}^{\theta}=k_{8}(t,r), \nonumber\\ 
	\Gamma_{\phi r}^{\phi}      &=\Gamma_{\theta r}^{\theta }=k_{9}(t,r), &
	\Gamma_{\theta \theta }^{r} &=\tfrac{\Gamma _{\phi \phi }^{r}}{\sin ^{2}\theta }=k_{10}(t,r), \nonumber\\ 
	\sin \theta \Gamma_{t\theta}^{\phi}  &=-\tfrac{\Gamma_{\phi t}^{\theta }}{\sin \theta }=k_{11}(t,r), & 
	\Gamma _{\phi \phi }^{\theta} 		 &=-\sin\theta \cos \theta \nonumber\\
	\sin \theta \Gamma _{r\theta}^{\phi} &=-\tfrac{\Gamma_{r\phi }^{\theta }}{\sin \theta }=k_{12}(t,r), & 
	\Gamma _{\theta \phi }^{\phi}        &=\Gamma_{\phi\theta }^{\phi }=\cot {\theta}\,. \label{eq:appcon}
\end{align}

The curvature tensor $R^c{}_{ab}\dot{\partial}_c = \left[ \delta _{a},\delta _{b} \right]$ of the induced nonlinear (Ehresmann) connection on $TM$ given by the coefficients $N^a{}_b=\Gamma^a_{bc}\dot{x}^c$, can be locally expressed as
\begin{equation*}
	R^a{}_{bc}=\delta _{c}N^a{}_{b}-\delta_{b}N^a{}_{c} = \partial_c N^a{}_b - N^d{}_c \dot\partial_d N^a{}_b -  \partial_b N^a{}_c + N^d{}_b \dot\partial_d N^a{}_c\,.
\end{equation*}
where:
\begin{equation}\label{eq:appcurv}
	\begin{array}{llll}
		R_{~tr}^{t}=a_{1}\dot{t}+a_{2}\dot{r} & R_{~tr}^{r}=a_{3}\dot{t}+a_{4}\dot{r} & R_{~tr}^{\theta }=a_{5}\dot{\theta} & R_{~tr}^{\varphi }=a_{5}\dot{\varphi} \\ 
		R_{~t\theta }^{t}=a_{6}\dot{\theta} & R_{~t\theta }^{r}=a_{7}\dot{\theta} & 
		R_{t\theta }^{\theta }=a_{8}\dot{t}+a_{9}\dot{r} & R_{t\theta }^{\varphi }=0
		\\ 
		R_{~t\varphi }^{t}=a_{6}\dot{\varphi}\sin ^{2}\theta  & R_{~t\varphi
		}^{r}=a_{7}\dot{\varphi}\sin ^{2}\theta  & R_{~t\varphi }^{\theta }=0 & 
		R_{t\varphi }^{\varphi }=a_{8}\dot{t}+a_{9}\dot{r} \\ 
		R_{~r\theta }^{t}=a_{10}\dot{\theta} & R_{~r\theta }^{r}=a_{11}\dot{\theta}
		& R_{~r\theta }^{\theta }=a_{12}\dot{t}+a_{13}\dot{r} & R_{~r\theta
		}^{\varphi }=0 \\ 
		R_{~r\varphi }^{t}=a_{10}\dot{\varphi}\sin ^{2}\theta  & R_{~r\varphi
		}^{r}=a_{11}\dot{\varphi}\sin ^{2}\theta  & R_{~r\varphi }^{\theta }=0 & 
		R_{~r\varphi }^{\varphi }=a_{12}\dot{t}+a_{13}\dot{r} \\ 
		R_{~\theta \varphi }^{t}=0 & R_{~\theta \varphi }^{r}=0 & R_{~\theta \varphi
		}^{\theta }=-a_{14}\dot{\varphi}\sin ^{2}\theta  & R_{~\theta \varphi
		}^{\varphi }=a_{14}\dot{\theta}\,.
	\end{array}
\end{equation}
The coefficients $a_i$ above are functions of $t$ and $r$, as follows:
\begin{align}\label{a_i}
	\begin{split}
		a_{1}  &= k_{1,r}-k_{2,t}+k_{3}k_{4}-k_{2}k_{6}\,,\\
		a_{2}  &= k_{2,r}-k_{3,t}+k_{2}^{2}+k_{3}k_{6}-k_{1}k_{3}-k_{2}k_{5}\,,  \\
		a_{3}  &= k_{4,r}-k_{6,t}+k_{1}k_{6}+k_{4}k_{5}-k_{2}k_{4}-k_{6}^{2}\,, \\
		a_{4}  &= k_{6,r}-k_{5,t}+k_{2}k_{6}-k_{3}k_{4}\,, \\
		a_{5}  &= k_{8,r}-k_{9,t}\,, \\
		a_{6}  &= -k_{7,t}+k_{7}k_{8}-k_{1}k_{7}-k_{2}k_{10}\,,\\
		a_{7}  &= -k_{10,t}+k_{8}k_{10}-k_{4}k_{7}-k_{6}k_{10}\,, \\
		a_{8}  &= -k_{8,t}+k_{1}k_{8}+k_{4}k_{9}-k_{8}^{2}\,,\\
		a_{9}  &= -k_{9,t}+k_{2}k_{8}+k_{6}k_{9}-k_{8}k_{9}  \,, \\
		a_{10} &= -k_{7,r}+k_{7}k_{9}-k_{2}k_{7}-k_{3}k_{10} \,,  \\
		a_{11} &= -k_{10,r}+k_{9}k_{10}-k_{6}k_{7}-k_{5}k_{10} \,,  \\
		a_{12} &= -k_{8,r}+k_{2}k_{8}+k_{6}k_{9}-k_{8}k_{9}\,,\\
		a_{13} &=-k_{9,r}+k_{3}k_{8}+k_{5}k_{9}-k_{9}^{2} \,, \\
		a_{14} &= 1+k_{7}k_{8}+k_{9}k_{10}  \,.
	\end{split}
\end{align}
The curvature components $R^a{}_{bcd}$ of the affine connection can be obtained by $\dot{x}$-differentiation from $R^{a}_{~cd}$, as: $R^a{}_{bcd}=\dot{\partial}_{b}R^{a}_{~cd}$; e.g., $a_1=R^{t}{}_{ttr}$ etc..

 \begin{acknowledgments}
	C.P. was funded by the cluster of excellence Quantum Frontiers funded by the Deutsche Forschungsgemeinschaft (DFG, German Research Foundation) under Germany's Excellence Strategy - EXC-2123 Quantum Frontiers - 390837967.
	S.Ch. was funded by the Transilvania Fellowships for Young Researchers 2021 grant. 
	This article is based upon work from COST Actions: CA21136 (Addressing observational tensions in cosmology with systematics and fundamental physics - CosmoVerse) and CA18108 (Quantum Gravity Phenomenology in the Multi-Messenger Approach), supported by COST (European Cooperation in Science and Technology).
 \end{acknowledgments}

\bibliographystyle{utphys}
\bibliography{SBSVac}

\end{document}